\newtheorem{theorem}{Theorem}[section]
\newtheorem{prop}[theorem]{Proposition}
\newtheorem{ex}[theorem]{Example}
\newtheorem{defn}[theorem]{Definition}
\begin{document}

\title{Harmonic Galois theory for finite graphs}
\author{Scott Corry}

\address{Lawrence University, Department of Mathematics, 711 E. Boldt Way -- SPC 24, Appleton, WI 54911, USA}
\email{corrys@lawrence.edu}

\maketitle

\begin{abstract} This paper develops a harmonic Galois theory for finite graphs, thereby classifying harmonic branched $G$-covers of a fixed base $X$ in terms of homomorphisms from a suitable fundamental group of $X$ together with $G$-inertia structures on $X$. As applications, we show that finite embedding problems for graphs have proper solutions and prove a Grunwald-Wang type result stating that an arbitrary collection of fibers may be realized by a global cover.
\end{abstract}

\section{Introduction}

The fact that finite graphs may be viewed as discrete Riemann surfaces has appeared in a variety of contexts, and this analogy has connections to arithmetic geometry, tropical geometry, and cryptography. Baker and Norine \cite{BNHarm} introduced harmonic morphisms as the correct graph-analogue of holomorphic maps and derived a harmonic Riemann-Hurwitz formula. They also studied hyperelliptic graphs: finite graphs of genus at least 2 possessing an involution such that the quotient morphism is harmonic with target a tree. In \cite{C} we furthered this line of thought by introducing the notion of a harmonic group action on a finite graph, and determined sharp linear genus bounds on the maximal size of such actions. The main result of \cite{C} is a graph-analogue of the Accola-Maclachlan \cite{A}, \cite{Mac} and Hurwitz \cite{H} genus bounds for holomorphic group actions on compact Riemann surfaces.

Our approach in \cite{C} was ``top-down'' in the sense that we began with a finite graph $Y$ and searched for groups $G$ acting harmonically on $Y$, our main tool being the Riemann-Hurwitz formula applied to the harmonic quotient morphism $Y\rightarrow G \backslash Y$. But Galois theory (whether for Riemann surfaces, algebraic curves, schemes, topological spaces, etc.) generally proceeds in a ``bottom-up'' fashion, fixing a base object $X$, and studying a distinguished class of ``covers'' of $X$ with the goal of classifying these covers by means of a suitable fundamental group (see~\cite{Sz} for a broad overview). In the case of Riemann surfaces (and more generally algebraic curves), this approach succeeds in classifying branched covers in addition to \'etale covers. The aim of this paper is to develop such a harmonic Galois theory for finite graphs, and our basic goal is as follows: given a finite graph $X$, a subset $B\subset V(X)$, and a finite group $G$, to describe as precisely as possible the harmonic $G$-covers of $X$ branched only at $B$. In section~\ref{EP}, we present such a description in the form of a Grunwald-Wang type theorem (Theorem~\ref{GW}).

As observed in section 4.1 of \cite{C}, arbitrary finite groups occur as inertia groups of harmonic branched covers of graphs, unlike the case of Riemann surfaces where all inertia is cyclic. Moreover, while horizontal ramification for graphs corresponds nicely to the ramification of Riemann surfaces, there is no analogue of vertical ramification in the classical context. In light of these differences, we wish to promote the idea that graphs are actually analogous to smooth proper algebraic curves over a perfect non-algebraically closed field. If $C\rightarrow D$ is a degree-$n$ \'etale cover of such curves, then a scheme-theoretic fiber $C_x$ may have fewer than $n$ points provided there is an extension of residue fields. If the cover is Galois, this corresponds to the presence of non-trivial decomposition groups with trivial inertia groups. Analogously, if $Y\rightarrow X$ is a harmonic $G$-cover of finite graphs with only vertical ramification, then each fiber $Y_x$ has $|G|$ vertices, but the vertical ramification produces fewer than $|G|$ connected components, and the stabilizers of these components play the role of decomposition groups. Motivated by this analogy, we will refer to horizontally unramified harmonic maps as graph-theoretic \'etale covers (see Definition~\ref{DI}).

Another difference between graphs and Riemann surfaces is that there is no full harmonic automorphism group of a finite graph. Moreover, if $Y\rightarrow X$ is a harmonic morphism, then there may exist two nonisomorphic subgroups $G_1,G_2\le \textrm{Aut}(Y|X)$ such that the natural maps $G_i \backslash Y\rightarrow X$ are isomorphisms (see Examples~\ref{S3}, \ref{Z6}). This means that we must specify the $G$-action on $Y$ as an $X$-graph as part of the data of our branched covers. We introduce the relevant categories of $G$-covers in Definitions~\ref{HarmDef}--\ref{EtDef} after reviewing the notion of harmonic group action from \cite{C}.

This paper began as a lecture delivered during the RIMS-Camp-Style Seminar ``Galois-theoretic Arithmetic Geometry'' held in Kyoto, Japan (October 19-24, 2010). The author thanks the organizers as well as the referee who provided many valuable comments. In addition, the author was supported by National Science Foundation grant DMS-1044746 for travel and conference expenses.

\subsection{Terminology and notation}\label{TN}

The term \emph{graph} will always refer to a finite multi-graph without loop edges. This means that two vertices of a graph may be connected by multiple edges, but no vertex has an edge to itself. While these are the main objects of interest, multi-graphs with (possibly infinitely many) loops will play an auxiliary role in our constructions. We will refer to these more general graphs as \emph{loop-graphs} to avoid confusion. For a (loop)-graph $X$, we denote by $V(X)$ and $E(X)$ the vertex- and edge-sets of $X$ respectively. For $x\in V(X)$, the subgraph of $X$ induced by the edges incident to $x$ is denoted $x(1)$, and should be thought of as the smallest neighborhood of $x$ in $X$. To be explicit, the vertices of $x(1)$ are $x$ together with the vertices of $X$ adjacent to $x$, and the edges of $x(1)$ are those of $X$ incident to $x$. The \emph{genus} of a connected graph $X$ is the rank of its first Betti homology group: $g(X)=|E(X)|-|V(X)|+1$. As in \cite{BNHarm}, \cite{C}, this language is chosen to emphasize the analogy with Riemann surfaces, despite the fact that the quantity $g(X)$ is more commonly called the cyclomatic number of $X$ by graph theorists, who use the term genus for a different concept (i.e. the minimal topological genus of a surface into which $X$ may be embedded).

\section{Harmonic group actions}\label{HG}

We begin by recalling the definition of a harmonic morphism between graphs and some related terminology from \cite{BNHarm}. Note that in the following definition, we allow the graphs to be disconnected.
\begin{defn} A \emph{morphism of graphs} $\phi:Y\rightarrow X$ is a function $\phi:V(Y)\cup E(Y)\rightarrow V(X)\cup E(X)$ mapping vertices to vertices and such that for each edge $e\in E(Y)$ with endpoints $y_1\ne y_2$, either $\phi(e)\in E(X)$ has endpoints $\phi(y_1)\ne \phi(y_2)$, or $\phi(e)=\phi(y_1)=\phi(y_2)\in V(X)$. In the latter case, we say that the edge $e$ is \emph{$\phi$-vertical}. $\phi$ is \emph{degenerate} at $y\in V(Y)$ if $\phi(y(1))=\{\phi(y)\}$, i.e. if $\phi$ collapses a neighborhood of $y$ to a vertex of $X$. The morphism $\phi$ is \emph{harmonic} if for all vertices $y\in V(Y)$, the quantity $|\phi^{-1}(e')\cap y(1)|$ is independent of the choice of edge $e'\in E(\phi(y)(1))$. 
\end{defn}

In section~\ref{etale}, we will need to consider morphisms between loop-graphs (see section~\ref{TN}), which for completeness we now define explicitly. The key point is that whereas graph morphisms must contract an edge whose endpoints are mapped to the same vertex, loop-graph morphisms can send such an edge to a loop.

\begin{defn}
A \emph{morphism of loop-graphs} $\psi:W\rightarrow Z$ is a function $\psi:V(W)\cup E(W)\rightarrow V(Z)\cup E(Z)$ mapping vertices to vertices and such that for each edge $e\in E(W)$ with endpoints $w_1,w_2$, either $\psi(e)\in E(Z)$ has endpoints $\psi(w_1),\psi(w_2)$, or $\psi(e)=\psi(w_1)=\psi(w_2)\in V(Z)$. 
\end{defn}

\begin{defn}\label{deg}
Let $\phi:Y\rightarrow X$ be a harmonic morphism between connected graphs. If $|V(X)|>1$ (i.e. if $X$ is not the point graph $\star$), then
the \emph{degree} of the harmonic morphism $\phi$ is the number of pre-images in $Y$ of any edge of $X$ (this is well-defined by \cite{BNHarm}, Lemma 2.4). If $X=\star$ is the point graph, then the \emph{degree} of $\phi$ is defined to be $|V(Y)|$, the number of vertices of $Y$.
\end{defn}

In \cite{BNHarm}, the authors define the degree of any harmonic morphism to the point graph $\star$ to be zero. Since such morphisms play a central role in our theory, we need to alter this convention as in Definition~\ref{deg}, especially in the context of vertex-transitive group actions on graphs (see Example~\ref{Cay}). Note, however, that according to our definition, a constant harmonic morphism to a connected graph with more than one vertex still has degree zero, as in \cite{BNHarm}.

\begin{defn}
Suppose that $G\le\textrm{Aut}(Y)$ is a (necessarily finite) group of automorphisms of the graph $Y$, so that we have a left action $G\times Y\rightarrow Y$ of $G$ on $Y$. We say that $(G,Y)$ is a \emph{faithful group action} if the stabilizer of each connected component of $Y$ acts faithfully on that component. Note that this condition is automatic if $Y$ is connected.
\end{defn} 

Given a faithful group action $(G,Y)$, we denote by $G\backslash Y$ the quotient graph\footnote{The notation $Y/G$ is used for the quotient graph in \cite{BNHarm} and \cite{C}. We have chosen the notation $G\backslash Y$ as in \cite{Sz} to emphasize the left action of $G$ on $Y$.} with vertex-set $V(G\backslash Y)=G\backslash V(Y)$, and edge-set 
$$
E(G\backslash Y)=G\backslash E(Y) - \{Ge \ | \ \textrm{$e$ has endpoints $y_1,y_2$ and $Gy_1=Gy_2$}\}.
$$
Thus, the vertices and edges of $G\backslash Y$ are the left $G$-orbits of the vertices and edges of $Y$, with any loop edges removed. There is a natural morphism $\phi_G:Y\rightarrow G\backslash Y$ sending each vertex and edge to its $G$-orbit, and such that edges of $Y$ with endpoints in the same $G$-orbit are $\phi_G$-vertical.

The observation that the quotient morphism $\phi_G$ need not be harmonic in general led us to introduce the notion of a harmonic group action in \cite{C}.
\begin{defn}\label{HA} Suppose that $(G,Y)$ is a faithful group action. Then $(G,Y)$ is a \emph{harmonic group action} if for all subgroups $H<G$, the quotient morphism $\phi_H:Y\rightarrow H\backslash Y$ is harmonic.
\end{defn}
The original definition (\cite{C}, Definition 2.4) included a non-degeneracy requirement for harmonic morphisms; we explictly relax that requirement in the present paper. Nevertheless, the following proposition shows that only a very specific type of degeneracy is possible for harmonic group actions with connected quotients. See Example~\ref{Cay} for an illustration.

\begin{prop}
Suppose that $(G,Y)$ is a harmonic group action such that $G\backslash Y$ is connected. If the quotient morphism $\phi_G:Y\rightarrow G\backslash Y$ is degenerate, then $G\backslash Y$ is the point graph $\star$. 
\end{prop}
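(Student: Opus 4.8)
The plan is to argue by contradiction: assume $G\backslash Y\neq\star$ and deduce that $\phi_G$ would necessarily have degree $0$, which is impossible because $\phi_G$ is surjective onto the (nonempty) edge set of $G\backslash Y$.

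First I would unpack the hypothesis. To say that $\phi_G$ is degenerate means that it is degenerate at some vertex $y\in V(Y)$, i.e. every edge of $Y$ incident to $y$ is $\phi_G$-vertical. Set $x:=\phi_G(y)$. If $x$ is incident to no edge of $G\backslash Y$, then connectedness of $G\backslash Y$ already forces $G\backslash Y=\star$ and we are done; so I may assume $x$ lies on some edge $e'$ of $G\backslash Y$. Since no edge at $y$ maps into $E(G\backslash Y)$, we have $|\phi_G^{-1}(e')\cap y(1)|=0$, and by harmonicity of $\phi_G$ the local degree $m_{\phi_G}(y):=|\phi_G^{-1}(e'')\cap y(1)|$ is independent of the edge $e''$ incident to $x$, hence equals $0$.

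Next I would spread this vanishing over the whole $G$-orbit of $y$: for $g\in G$ the automorphism $g\colon Y\to Y$ restricts to an isomorphism $y(1)\to(gy)(1)$ and satisfies $\phi_G\circ g=\phi_G$, so $m_{\phi_G}(gy)=m_{\phi_G}(y)=0$. Since $\phi_G^{-1}(x)=Gy$, every vertex in the fibre over $x$ has vanishing local degree. Now the degree of $\phi_G$ is defined (as $G\backslash Y$ is connected, Definition~\ref{deg}) as the number of preimages of any edge; taking $e'$ and observing that each of its preimages is non-$\phi_G$-vertical and therefore has exactly one endpoint over $x$, a routine count gives
\[
\deg(\phi_G)=|\phi_G^{-1}(e')|=\sum_{y'\in\phi_G^{-1}(x)}|\phi_G^{-1}(e')\cap y'(1)|=\sum_{y'\in\phi_G^{-1}(x)}m_{\phi_G}(y')=0.
\]
On the other hand, by the construction of $G\backslash Y$ the edge $e'$ is $\phi_G(e)$ for some (non-vertical) edge $e$ of $Y$, so $|\phi_G^{-1}(e')|\ge 1$ — a contradiction. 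Hence $G\backslash Y$ has no edges, and being connected it must be the point graph $\star$.

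I do not expect a serious obstacle; the only points requiring care are the bookkeeping conventions for $\phi_G$-vertical edges, the treatment of the possibly edgeless neighborhood $x(1)$, and the fact that the identity $\deg(\phi_G)=\sum_{y'\mapsto x}m_{\phi_G}(y')$ — the graph analogue of ``degree $=$ sum of local degrees in a fibre'' — holds verbatim even when $Y$ is disconnected, since the degree counts all edge-preimages regardless of which component of $Y$ they lie in.
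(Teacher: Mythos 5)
Your proposal is correct and follows essentially the same route as the paper's proof: degeneracy propagates over the whole fibre $Y_x$ via the $G$-action, so every edge of $G\backslash Y$ incident to $x$ would have empty preimage, contradicting the fact that each edge of the quotient is by construction the image of a non-vertical edge of $Y$; connectivity then forces $G\backslash Y=\star$. The detour through the ``degree equals sum of local degrees over the fibre'' identity (and the explicit appeal to harmonicity, which is not actually needed once all edges at $y$ are known to be vertical) is harmless but slightly heavier than the paper's direct observation that $\phi_G^{-1}(e)=\emptyset$ for every $e\in E(x(1))$.
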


\begin{proof}
Suppose that $\phi_G$ is degenerate at $y\in V(Y)$, and set $x=\phi_G(y)$. Then $\phi_G$ is degenerate at every vertex of the fiber $Y_x$. But then $\phi_G^{-1}(e)=\emptyset$ for every edge $e\in E(x(1))$. Since $\phi_G$ is surjective, it follows that $E(x(1))=\emptyset$. Connectivity then implies $G\backslash Y=\star$.
\end{proof}

Proposition 2.5 of \cite{C} provides a criterion for a group action on a connected graph to be harmonic and non-degenerate, and a small modification of the proof shows that the following version holds for possibly degenerate harmonic group actions on possibly disconnected graphs.
\begin{prop}\label{Crit}
Suppose that $(G,Y)$ is a faithful group action. Then $(G,Y)$ is a harmonic group action if and only if for every vertex $y\in V(Y)$, the stabilizer subgroup $G_y$ acts freely on the edge-set $E(y(1))$. 
\end{prop}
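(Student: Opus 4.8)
The plan is to reduce the whole statement to one orbit-counting identity and then read it against the definition of a harmonic morphism. Fix a subgroup $H\le G$, a vertex $y$, and an edge $e'$ of $H\backslash Y$ incident to $\phi_H(y)=Hy$. Since loops are deleted when forming $H\backslash Y$, after replacing a representative edge of $Y$ by an $H$-translate I may write $e'=He$ with $e$ an edge of $Y$ having endpoints $y$ and some $z$ satisfying $Hy\ne Hz$; in particular $e$ is incident to $y$ and is not $\phi_H$-vertical. Because the orbits $Hy$ and $Hz$ are disjoint, a translate $he$ (with $h\in H$) is incident to $y$ exactly when $hy=y$, i.e.\ when $h\in H_y:=H\cap G_y$. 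Hence $\phi_H^{-1}(e')\cap y(1)=\{\,he \mid h\in H_y\,\}$ is a single $H_y$-orbit, so $|\phi_H^{-1}(e')\cap y(1)|=[H_y:(H_y)_e]$, where $(H_y)_e=\{h\in H_y\mid he=e\}$ (any such $h$ fixes both endpoints of $e$).

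For the implication ($\Leftarrow$) I would argue directly. If $G_y$ acts freely on $E(y(1))$ for every vertex $y$, then for each $H\le G$ and each $y$ the group $(H_y)_e$ is a subgroup of $(G_y)_e=\{1\}$, so the identity above yields $|\phi_H^{-1}(e')\cap y(1)|=|H_y|$, which is independent of the chosen edge $e'\in E(\phi_H(y)(1))$. Thus $\phi_H$ is harmonic at every $y$, hence harmonic; as $H$ was arbitrary, $(G,Y)$ is a harmonic group action by Definition~\ref{HA}.

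For the implication ($\Rightarrow$) I would argue by contradiction, in the spirit of Proposition~2.5 of \cite{C}. Suppose $(G,Y)$ is harmonic but $G_{y_0}$ fails to act freely on $E(y_0(1))$ for some vertex $y_0$; then there is $g\ne 1$ in $G_{y_0}$ with $ge_0=e_0$ for some edge $e_0$ incident to $y_0$. Set $H=\langle g\rangle\le G$, so that $\phi_H$ is harmonic. The crux is a rigidity observation: \emph{if $v$ is a vertex with $gv=v$ that admits at least one $g$-fixed edge in $E(v(1))$, then $g$ fixes every edge of $E(v(1))$.} Indeed $Hv=\{v\}$, so no edge at $v$ is $\phi_H$-vertical and $H_v=H$; by the identity of the first paragraph, harmonicity of $\phi_H$ at $v$ forces the integers $[H:H_f]$, as $f$ ranges over $E(v(1))$, all to coincide, and since one of them (for the $g$-fixed edge) equals $1$, every $f\in E(v(1))$ has $H_f=H$, i.e.\ $gf=f$. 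Applying this at $y_0$ (with the $g$-fixed edge $e_0$) shows $g$ fixes every edge at $y_0$, hence fixes every neighbour of $y_0$ while fixing an incident edge there; iterating across the connected component $C$ of $y_0$ --- which contains the edge $e_0$, so has no isolated vertices --- shows $g$ fixes $C$ pointwise. But $g$ stabilizes $C$ and $\mathrm{Stab}(C)$ acts faithfully on $C$, forcing $g=1$, a contradiction.

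The routine steps are the orbit count and the ($\Leftarrow$) implication; the real work is in ($\Rightarrow$). I expect the main obstacle to be that the local harmonicity conditions are weak --- they only assert that certain $H$-orbits inside $E(v(1))$ have equal size, not that $g$ acts trivially --- so one must first upgrade them, via the rigidity observation, to the statement that $g$ fixes an entire neighbourhood, then propagate this over the whole component, and only then invoke the global faithfulness hypothesis to kill $g$. A secondary point requiring care is the bookkeeping with $\phi_H$-vertical edges and with the deletion of loops in $H\backslash Y$, which is exactly what legitimizes the normalization made in the first paragraph.
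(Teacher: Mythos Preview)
The paper does not actually give its own proof of this proposition; it simply remarks that a small modification of the proof of Proposition~2.5 in \cite{C} handles the possibly degenerate, possibly disconnected setting. Your argument is correct and self-contained, and it is precisely the kind of argument one expects that reference to contain: the orbit-counting identity $|\phi_H^{-1}(e')\cap y(1)|=[H_y:(H_y)_e]$ is exactly the right local computation, the ($\Leftarrow$) direction then drops out immediately, and your rigidity-plus-propagation argument for ($\Rightarrow$) --- using harmonicity of $\phi_{\langle g\rangle}$ at a fixed vertex to force $g$ to fix the entire star, then walking this across the component and invoking the faithfulness hypothesis --- is the natural way to finish. The care you take with $\phi_H$-vertical edges and with the deletion of loops in the quotient is also exactly what is needed to make the first paragraph rigorous. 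In short, your proof fills in the details the paper omits and matches the intended approach.
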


An equivalent way of stating the previous criterion is that $(G,Y)$ is harmonic if and only if the stabilizers of \emph{directed} edges are trivial. Note, however, that a \emph{non-directed} edge $e$ can be fixed by some element $\tau\in G$, provided that $\tau$ switches the endpoints of $e$. But then $\tau^2$ fixes the directed edge $e$, forcing it to be the identity. Similarly, if $\tau'$ is another involution flipping $e$, then $\tau\tau'$ fixes the directed edge $e$, forcing $\tau=\tau'$. This shows that non-trivial edge-stablizers of harmonic group actions have order 2. If all edge-stabilizers are trivial, then we say that the harmonic group action is \emph{unflipped}. 

We now describe a simple procedure that produces an unflipped harmonic group action from an arbitrary harmonic group action $(G,Y)$. If $e$ is a flipped edge in $(G,Y)$, then by the orbit-stabilizer theorem, the orbit $Ge$ has $\frac{|G|}{2}$ elements. Doubling each of the edges in $Ge$ yields a new graph $Y'$ containing $Y$ as a subgraph. Moreover, $G$ acts harmonically on $Y'$ in such a way that $e$ is unflipped. Repeating this construction for any remaining flipped edges ultimately produces an unflipped harmonic group action $(G,\tilde{Y})$. Moreover, the graph $\tilde{Y}$ is minimal with respect to the property that $\tilde{Y}$ is obtained from $Y$ by adding edges. In this way we see that every harmonic group action $(G,Y)$ correponds to a unique unflipped action $(G,\tilde{Y})$, called the $\emph{unflipped model}$ of $(G,Y)$. Conversely, given an unflipped action $(G,\tilde{Y})$, it is straightforward to recover all pairs $(G,Y)$ whose unflipped model is $(G,\tilde{Y})$ by looking at the way conjugacy classes of involutions in $G$ act on $\tilde{Y}$. Thus, there is no loss in restricting attention to unflipped harmonic group actions; the fact that such actions are fixed-point free away from the vertices (when viewed as maps of topological spaces) will be essential to our analysis in section~\ref{etale}.

The following central example explains our desire to weaken the definition of harmonic group action as in Definition~\ref{HA} by removing the requirement of non-degeneracy.
\begin{ex}[Cayley graphs]\label{Cay} Let $G$ be a finite group, and $S=\{\delta_i\}$ a finite symmetric multi-set of elements of $G$. Thus, the elements of $S$ may occur with multiplicity, and $S$ is stable under inversion of group elements.  We define a (possibly disconnected) \emph{Cayley graph} $\textrm{Cay}(G,S)$ with vertex set $G$ as follows: for each $\delta_i\in S$, and each vertex $g\in G$, there is an edge connecting $g$ to $g\delta_i$. Moreover, if $\delta_i\ne \delta_i^{-1}$, we identify this edge to the edge connecting $g\delta_i$ to $g=g\delta_i\delta_i^{-1}$ corresponding to $\delta_i^{-1}\in S$. We \emph{do not} identify edges corresponding to involutions in $S$. The graph $\textrm{Cay}(G,S)$ will be connected precisely when $G$ is generated by the subset $S$. In any case, $\textrm{Cay}(G,S)$ supports a natural unflipped $G$-action given by left multiplication in $G$. Since the vertex stabilizers are trivial, Proposition~\ref{Crit} shows that the action is harmonic. It is also vertex-transitive, so the quotient graph is a single point,  and the quotient morphism $\phi_G:\textrm{Cay}(G,S)\rightarrow\star$ is degenerate. The degree of $\phi_G$ is $|G|$ according to Definition~\ref{deg}.
\end{ex}

\begin{ex}[A Cayley graph on $\mathfrak{S}_3$]\label{S3}
Consider the symmetric group $\mathfrak{S}_3=\left<\sigma,\tau \ | \ \sigma^3=\tau^2=\varepsilon, \sigma\tau=\tau\sigma^2\right>$, together with the generating set $S=\{\sigma,\sigma^{-1},\tau\}$. The Cayley graph $\textrm{Cay}(\mathfrak{S}_3,S)$ is shown below:

\begin{figure}[h]
\centering
\begin{tikzpicture}
\tikzstyle{every node}=[circle, draw, fill=black!50, inner sep=0pt, minimum width=3pt]

\node (1) at  (-1,1)  {};
\coordinate [label=left : $\tau$] (tau) at (-1.1,1);
\node (2) at  (0,1.7) {} ;  
\coordinate [label=above : $\sigma\tau$] (sigmatau) at (0,1.7);      
\node (3) at  (1,1) {};
\coordinate [label=right : $\sigma^2\tau$] (sigma2tau) at (1.1,1);

\node (4) at  (-1,0)  {};
\coordinate [label=left : $\varepsilon$] (epsilon) at (-1.1,0);
\node (5) at  (0,.7) {} ;  
\coordinate [label=below : $\sigma$] (sigma) at (0,.6);      
\node (6) at  (1,0) {};
\coordinate [label=right : $\sigma^2$] (sigma2) at (1.1,0);

\draw[-] (1) to (2);
\draw[-] (2) to (3);
\draw [-] (3) to (1);

\draw[-] (4) to (5);
\draw[-] (5) to (6);
\draw[-] (6) to (4);

\draw[-] (1) to [out=240, in=120] (4);
\draw[-] (1) to [out=300, in=60] (4);
\draw[-] (2) to [out=240, in=120] (5);
\draw[-] (2) to [out=300, in=60] (5);
\draw[-] (3) to [out=240, in=120] (6);
\draw[-] (3) to [out=300, in=60] (6);

\end{tikzpicture}
\end{figure}
\noindent
The single horizontal edges correspond to the pair $\{\sigma,\sigma^{-1}\}\subset S$, while the double vertical edges correspond to the involution $\tau\in S$. The $\mathfrak{S}_3$-action is given by left multiplication on the labeled vertices.
\end{ex}

\begin{ex}[A Cayley graph on $\mathbb{Z}/6\mathbb{Z}$]\label{Z6}
Consider the cyclic group $\mathbb{Z}/6\mathbb{Z}=\left<\alpha \ | \ \alpha^6=\varepsilon\right>$, together with the generating set $S=\{\alpha^2,\alpha^{-2},\alpha^3\}$. The Cayley graph $\textrm{Cay}(\mathbb{Z}/6\mathbb{Z},S)$ is shown below:

\begin{figure}[h]
\centering
\begin{tikzpicture}
\tikzstyle{every node}=[circle, draw, fill=black!50, inner sep=0pt, minimum width=3pt]

\node (1) at  (-1,1)  {};
\coordinate [label=left : $\alpha^3$] (a3) at (-1.1,1);
\node (2) at  (0,1.7) {} ;  
\coordinate [label=above : $\alpha^5$] (a5) at (0,1.7);      
\node (3) at  (1,1) {};
\coordinate [label=right : $\alpha$] (a) at (1.1,1);

\node (4) at  (-1,0)  {};
\coordinate [label=left : $\varepsilon$] (epsilon) at (-1.1,0);
\node (5) at  (0,.7) {} ;  
\coordinate [label=below : $\alpha^2$] (a2) at (0,.6);      
\node (6) at  (1,0) {};
\coordinate [label=right : $\alpha^4$] (a4) at (1.1,0);

\draw[-] (1) to (2);
\draw[-] (2) to (3);
\draw [-] (3) to (1);

\draw[-] (4) to (5);
\draw[-] (5) to (6);
\draw[-] (6) to (4);

\draw[-] (1) to [out=240, in=120] (4);
\draw[-] (1) to [out=300, in=60] (4);
\draw[-] (2) to [out=240, in=120] (5);
\draw[-] (2) to [out=300, in=60] (5);
\draw[-] (3) to [out=240, in=120] (6);
\draw[-] (3) to [out=300, in=60] (6);

\end{tikzpicture}
\end{figure}
\noindent
The single horizontal edges correspond to the pair $\{\alpha^2,\alpha^{-2}\}\subset S$, while the double vertical edges correspond to the involution $\alpha^3\in S$. The $\mathbb{Z}/6\mathbb{Z}$-action is given by left multiplication on the labeled vertices.

\end{ex}

These examples show that the same graph $Y$ may be a Cayley graph for two different groups. In particular, the group $G$ cannot be recovered from the quotient morphism $\phi_G:Y\rightarrow \star$. This phenomenon has nothing to do with the target being a point: a harmonic group action $(G,Y)$ is not in general determined by the quotient morphism $Y\rightarrow G\backslash Y$. For an example where the target is a segment (rather than a point), consider the ``cone'' on the graph in Examples~\ref{S3} and \ref{Z6}:

\begin{figure}[h]
\centering
\begin{tikzpicture}
\tikzstyle{every node}=[circle, draw, fill=black!50, inner sep=0pt, minimum width=3pt]

\node (1) at  (-1,1)  {};
%\coordinate [label=left : $\alpha^3$] (a3) at (-1.1,1);
\node (2) at  (0,1.7) {} ;  
%\coordinate [label=above : $\alpha^5$] (a5) at (0,1.7);      
\node (3) at  (1,1) {};
%\coordinate [label=right : $\alpha$] (a) at (1.1,1);

\node (4) at  (-1,0)  {};
%\coordinate [label=left : $\varepsilon$] (epsilon) at (-1.1,0);
\node (5) at  (0,.7) {} ;  
%\coordinate [label=below : $\alpha^2$] (a2) at (0,.6);      
\node (6) at  (1,0) {};
%\coordinate [label=right : $\alpha^4$] (a4) at (1.1,0);

\node (7) at (-3,.85) {};

\draw[-] (1) to (2);
\draw[-] (2) to (3);
\draw [-] (3) to (1);

\draw[-] (4) to (5);
\draw[-] (5) to (6);
\draw[-] (6) to (4);

\draw[-] (1) to [out=240, in=120] (4);
\draw[-] (1) to [out=300, in=60] (4);
\draw[-] (2) to [out=240, in=120] (5);
\draw[-] (2) to [out=300, in=60] (5);
\draw[-] (3) to [out=240, in=120] (6);
\draw[-] (3) to [out=300, in=60] (6);

\draw[-] (7) to (1);
\draw[-] (7) to (2);
\draw[-] (7) to [out=45, in=90] (3);
\draw[-] (7) to (4);
\draw[-] (7) to (5);
\draw[-] (7) to [out=-45, in=-135] (6);

\end{tikzpicture}
\end{figure}
\noindent
The $\mathfrak{S}_3$ and $\mathbb{Z}/6\mathbb{Z}$-actions from the examples extend to this graph, and both actions yield the same quotient morphism to a segment, so the group cannot be recovered from the quotient morphism alone. Hence, we study harmonic $G$-covers of a connected graph $X$ as defined below, rather than ``Galois covers'' of $X$. Note that in the following definition and throughout the rest of the paper the $G$-covers under consideration are required to be connected; this corresponds to the fact that Galois covers in topology are connected by definition.

\begin{defn}\label{HarmDef} Let $\textbf{Harm}$ be the category with objects
$$
\{(G,Y,y) \ | \ (G,Y) \textrm{ unflipped harmonic action, Y connected, $y\in V(Y)$}\},
$$
and morphisms
\begin{multline*}
\textrm{Hom}_{\textbf{Harm}}((G,Y,y), (G',Y',y')):=\\
\{(\varphi:G\rightarrow G', f:(Y,y)\rightarrow (Y',y')) \ | \
\varphi(g)f(\alpha)=f(g\alpha) \ \forall g\in G, \alpha\in Y\}.
\end{multline*}
Here $\varphi$ is a homomorphism of groups, $f$ is a harmonic morphism of pointed graphs, and $\alpha$ runs over both the vertices and edges of $Y$.
\end{defn}

The category \textbf{CG} of pointed connected graphs with harmonic morphisms embeds fully into the category \textbf{Harm} by sending $(X,x)$ to the trivial action $(\{\textrm{id}_X\}, X,x)$. Using this identification, we define the category $\textbf{Harm}_{(X,x)}$ of \emph{harmonic $G$-covers of $X$} as follows.

\begin{defn}\label{HarmXDef} Let $(X,x)$ be a pointed connected graph. Then define $\textbf{Harm}_{(X,x)}$
to be the full subcategory of the slice category of \textbf{Harm} over $(X,x)$ with objects
$$
\{f:(G,Y,y)\rightarrow (X,x) \ |  \ \overline{f}:G\backslash Y\rightarrow X \textrm{ is an isomorphism}\}.
$$
\end{defn}

\begin{defn}\label{DI}
Suppose that $f:(G,Y,y)\rightarrow (X,x)$ is a harmonic $G$-cover of $X$, and $w\in V(Y)$. The \emph{decomposition group} $\Delta_w$ at $w$ is the stabilizer of the connected component of the fiber $Y_{f(w)}$ containing $w$. The \emph{inertia group} $I_w$ at $w$ is the stabilizer subgroup of $w$ in $G$. Note that $I_w\le \Delta_w$, and the decomposition / inertia groups form conjugacy classes in $G$ as $w$ varies over the fiber $Y_{f(w)}$. We say that $f$ is \emph{horizontally unramified} or \emph{\'etale at $w$} if $I_w=\{\varepsilon\}$, and the cover $f$ is \emph{\'etale} if it is \'etale at all $w\in V(Y)$. If $\Delta_{w}=\{\varepsilon\}$ (resp. $I_w=G$) we say that $f(w)$ is \emph{totally split} (resp. \emph{totally ramified}) in $Y$.
\end{defn}

As mentioned in the Introduction, our definition of decomposition and inertia groups is motivated by the theory of algebraic curves over perfect non-algebraically closed fields. If $C\rightarrow D$ is a degree-$n$ Galois cover of such curves, then the fiber $C_x$ over a point $x\in D$ may have fewer than $n$ points for two different reasons: ramification and extension of residue fields. Moreover, the relationship between these two possibilities is captured by the decomposition and inertia groups. For $y\in C_x$, the decomposition group at $y$ is defined to be the stabilizer subgroup of $y$ in $\textrm{Gal}(C|D)$, while the inertia subgroup is defined to be the subgroup of the decomposition group that acts trivially on the residue field at $y$. Ramification corresponds to a nontrivial inertia group, while an extension of residue fields corresponds to the inertia being a \emph{proper} subgroup of the decomposition group. Of course, over an algebraically closed field like $\mathbb{C}$ (corresponding to the case of Riemann surfaces), no residue field extension is possible, and the decomposition and inertia groups are identical. By analogy, our graph-theoretic definitions are motivated by the idea that the connected components of the fiber $Y_{f(w)}$ are the ``points'' of the fiber, which allows for a new perspective on the phenomena of ``vertical ramification'': it is not ramification at all, but rather the graph-theoretic analogue of an extension of residue fields. Since \'etale (i.e. unramified) covers of curves are classified by the algebraic fundamental group, our analogy suggests that \'etale $G$-covers of graphs should be classified by a suitable fundamental group (see section~\ref{Et}).

\begin{defn}\label{EtDef} For a pointed connected graph $(X,x)$, we denote by $\textbf{\'Et}_{(X,x)}$ the full subcategory of $\textbf{Harm}_{(X,x)}$ consisting of \'etale $G$-covers of $X$.
\end{defn}

\subsection{Spanning trees}\label{trees}

A Riemann surface of genus $g$ is given by specifying a complex structure on the unique orientable topological surface of genus $g$. In the case of graphs, the best we can say is that every connected graph $X$ of genus $g$ is homotopy equivalent to the rose $R_g$ consisting of one vertex with $g$ loop edges. Such a homotopy equivalence induces an isomorphism $\pi_1(X)\cong\pi_1(R_g)\cong F_g$, the free group on $g$ generators, thereby yielding a concrete description of unramified covers of $X$. But a homotopy equivalence $X\rightarrow R_g$ will generally collapse many vertices of $X$ to the unique vertex of $R_g$, thereby destroying the notion of distinguished branch points. Moreover, whereas in the case of Riemann surfaces we can capture the branching by puncturing the surface, the analogous strategy for graphs fails. Indeed, puncturing a surface adds a free generator to the fundamental group (explaining why all inertia is cyclic), while removing a vertex from a graph will (if anything) \emph{decrease} the size of the fundamental group by killing off cycles. As we will see, rather than puncturing a surface to allow for ramification above a branch point, we will add a countably infinite wedge of loops at a vertex of $X$ to allow for ``vertical ramification'' (which we have argued above should be thought of as analogous to an extension of residue fields rather than a type of ramification). But this realization, while important, doesn't change the fact that any fundamental group that hopes to classify harmonic covers will need to see the difference between vertices of $X$.

Our solution to this problem is to choose a spanning tree $T\subset X$, so that $X$ may be thought of as the tree $T$ together with the extra structure of a multi-set of $g$ pairs of vertices specifying the edges of $X-T$. Of course, the spanning tree $T$ is not uniquely determined by $X$. Indeed, it is exactly this lack of uniqueness that forces us to fix a spanning tree $T\subset X$ in the subsequent development. The importance of fixing the spanning tree $T$ for the pointed graph $(X,x)$ comes from the following observation: if $f:(Y,y)\rightarrow (X,x)$ is a non-degenerate harmonic morphism, then the tree $T$ lifts (non-uniquely) to a tree $\tilde{T}$ in $Y$ containing $y$.  If $f$ is horizontally unramified, then the lifting is unique. In any case, the tree $\tilde{T}$ determines a vertex section $V(X)\rightarrow V(Y)$ to the map $f$, which we will denote by $z\mapsto\tilde{z}$. 

\section{The \'etale fundamental group}\label{etale}

Fix a connected pointed graph $(X,x)$. In this section we describe the category $\textbf{\'Et}_{(X,x)}$ by means of a suitable fundamental group, which we will call \emph{the \'etale fundamental group} of the pointed graph $(X,x)$ and denote by $\pi_1^{\textrm{\'et}}(X,x)$. The upshot, as described more fully in the next section, is that the structure of this fundamental group provides a concrete description of the \'etale $G$-covers of $X$. Namely, we will see that to give a pointed \'etale $G$-cover $(G,Y,y)\rightarrow (X,x)$ is to give a homomorphism $\pi_1(X,x)\rightarrow G$ (i.e. a pointed, topological $G$-cover of $X$) together with a finite, symmetric, unordered multi-set of nontrivial elements of $G$ at each vertex of $X$. Before embarking on the construction of the \'etale fundamental group, we briefly describe the steps of the argument as an aid to the reader:

\begin{enumerate}
\item Associate to each \'etale $G$-cover of $X$ a Galois topological cover of a loop-graph $X_n$.
\item Show that this association yields an isomorphism between a certain subcategory of \'etale $G$-covers $\textbf{\'Et}_{(X,x)}(n)$ and a category of Galois topological covers of $X_n$.
\item Take the direct limit to obtain an isomorphism between $\textbf{\'Et}_{(X,x)}$ and a category of Galois topological covers of a loop-graph $X_\infty$.
\item Use the theory of fiber functors to obtain a profinite group classifying the appropriate Galois topological covers of $X_\infty$. For an overview of the use of fiber functors in Galois theory, see \cite{Sz}.
\end{enumerate}
As described in section~\ref{trees}, we fix a spanning tree $T\subset X$ once and for all.

\noindent
\underline{Step 1:} Given an object $f:(G,Y,y)\rightarrow (X,x)$ of $\textbf{\'Et}_{(X,x)}$, let $\widetilde{G\backslash Y}$ denote the \emph{uncontracted} quotient, which may have loops. We have the following commutative diagram, where $X_f$ is a loop-graph obtained from $X$ by adding finitely many loops at each vertex, 
$\tilde{f}$ is an isomorphism extending $\overline{f}$, and the bottom vertical arrows are loop contractions:
$$
\begin{CD}
Y	   						@=						Y\\
@VVV              											@VVV\\
\widetilde{G\backslash Y}                   @>\tilde{f}>>				X_f\\
@VVV												@VVV\\
G\backslash Y					@>\overline{f}>>			X.			
\end{CD}
$$
Note that the isomorphism $\tilde{f}$ is unique up to a permutation of the loops of $X_f$, i.e. up to the action of the ``loop group'' $L_f:=\prod_{z\in V(X)}\mathfrak{S}_{n_z}$ on $X_f$, where
 \begin{eqnarray*}
n_z &:=& \textrm{ number of loops at the vertex } z\in X_f\\
&=& \textrm{ number of loops at the vertex } \overline{f}^{-1}(z)\in V(\widetilde{G\backslash Y}).
 \end{eqnarray*}
We thus obtain a map $Y\rightarrow X_f$, uniquely defined up to the action of $L_f$. Note that this map is purely combinatorial, but may be viewed as a topological $G$-cover of 1-dimensional CW complexes once we choose an orientation for each loop of $X_f$. Hence, the topological $G$-cover $Y\rightarrow X_f$ is defined up to the action of the ``topological loop group'' $TL_f:=\prod_{z\in V(X)}\left(\mathbb{Z}/2\mathbb{Z}\right)^{n_z}\rtimes L_f$, where the elementary abelian 2-groups account for the choice of orientations.

Now choose $n\ge\max\{n_z \ | \ z\in V(X)\}$, and let $X_n$ be the loop-graph obtained from $X$ by adding $n$ loops at each vertex. We have a surjective contraction map $X_n\rightarrow X_f$, which is well-defined up to the action of the topological loop group $TL_n$ on $X_n$. Pulling back $Y\rightarrow X_f$ over this contraction map yields a topological $G$-cover $Y_f\rightarrow X_n$, which is well-defined up to the action of $TL_n$ on $X_n$. This fits into the previous diagram as follows:
$$
\begin{CD}
Y	   				  @=				Y		@<<<	Y_f\\
@VVV              						   @VVV				@VVV\\
\widetilde{G\backslash Y}   @>\tilde{f}>>		X_f		@<<<	X_n\\
@VVV							   @VVV\\
G\backslash Y		            @>\overline{f}>>	X.			
\end{CD}
$$

To illustrate, consider the following $\mathbb{Z}/2\mathbb{Z}$-cover:

\begin{figure}[h]
\centering
\begin{tikzpicture}
\tikzstyle{every node}=[circle, draw, fill=black!50, inner sep=0pt, minimum width=3pt]

\node (1) at  (-1,1)  {};
\coordinate [label=left : $Y$] (Y) at (-1.5,.5);
\node (3) at  (1,1) {};
%\coordinate [label=right : $\alpha$] (a) at (1.1,1);

\node (4) at  (-1,0)  {};
%\coordinate [label=left : $\varepsilon$] (epsilon) at (-1.1,0);
\node (6) at  (1,0) {};
%\coordinate [label=right : $\alpha^4$] (a4) at (1.1,0);

\draw [-] (3) to (1);

\draw[-] (6) to (4);

\draw[-] (1) to [out=240, in=120] (4);
\draw[-] (1) to [out=300, in=60] (4);
\draw[-] (1) to [out=210, in=150] (4);
\draw[-] (1) to [out=330, in=30] (4);
\draw[-] (3) to [out=240, in=120] (6);
\draw[-] (3) to [out=300, in=60] (6);

\draw[->] (0,-.25) -- (0,-1.25);

\node (2) at (-1,-1.5) {};
\node (5) at (1,-1.5) {};
\coordinate [label=left : $X$] (X) at (-1.5,-1.5);

\draw[-] (2) to (5);

\end{tikzpicture}
\end{figure}
\noindent
Applying the foregoing construction yields the following picture corresponding to the right side of the previous commutative diagram:

\begin{figure}[h]
\centering
\begin{tikzpicture}
\tikzstyle{every node}=[circle, draw, fill=black!50, inner sep=0pt, minimum width=3pt]

\node (1) at  (-1,1)  {};
\coordinate [label=left : $Y$] (Y) at (-1.5,.5);
\node (3) at  (1,1) {};
%\coordinate [label=right : $\alpha$] (a) at (1.1,1);

\node (4) at  (-1,0)  {};
%\coordinate [label=left : $\varepsilon$] (epsilon) at (-1.1,0);
\node (6) at  (1,0) {};
%\coordinate [label=right : $\alpha^4$] (a4) at (1.1,0);

\draw [-] (3) to (1);

\draw[-] (6) to (4);

\draw[-] (1) to [out=240, in=120] (4);
\draw[-] (1) to [out=300, in=60] (4);
\draw[-] (1) to [out=210, in=150] (4);
\draw[-] (1) to [out=330, in=30] (4);
\draw[-] (3) to [out=240, in=120] (6);
\draw[-] (3) to [out=300, in=60] (6);

\draw[->] (0,-.25) -- (0,-1.25);

\node (2) at (-1,-1.5) {};
\node (5) at (1,-1.5) {};
\coordinate [label=left : $X_f$] (Xf) at (-1.5,-1.5);

\draw[-] (2) to (5);
\draw[-] (2) to [out=0, in=0] (-1,-1);
\draw[-] (2) to [out=180, in=180] (-1,-1);
\draw[-] (2) to [out=0, in=0] (-1,-2);
\draw[-] (2) to [out=180, in=180] (-1,-2);
\draw[-] (5) to [out=0, in=0] (1,-1);
\draw[-] (5) to [out=180, in=180] (1,-1);

\draw[->] (0,-1.75) -- (0,-2.75);

\node (7) at (-1,-3) {};
\node (8) at (1,-3) {};
\coordinate [label=left : $X$] (X) at (-1.5,-3);

\draw[-] (7) to (8);

\draw[<-] (1.5,.5) -- (2.5,.5);
\draw[<-] (1.5,-1.5) -- (2.5,-1.5);

\node (9) at  (3,1)  {};
\coordinate [label=right : $Y_f$] (Yf) at (5.5,.5);
\node (10) at  (5,1) {};
%\coordinate [label=right : $\alpha$] (a) at (1.1,1);

\node (11) at  (3,0)  {};
%\coordinate [label=left : $\varepsilon$] (epsilon) at (-1.1,0);
\node (12) at  (5,0) {};
%\coordinate [label=right : $\alpha^4$] (a4) at (1.1,0);

\draw [-] (10) to (9);

\draw[-] (12) to (11);

\draw[-] (9) to [out=240, in=120] (11);
\draw[-] (9) to [out=300, in=60] (11);
\draw[-] (9) to [out=210, in=150] (11);
\draw[-] (9) to [out=330, in=30] (11);
\draw[-] (10) to [out=240, in=120] (12);
\draw[-] (10) to [out=300, in=60] (12);
\draw[-] (10) to [out=0, in=0] (5,1.5);
\draw[-] (10) to [out=180, in=180] (5,1.5);
\draw[-] (12) to [out=0, in=0] (5,-.5);
\draw[-] (12) to [out=180, in=180] (5,-.5);

\draw[->] (4,-.25) -- (4,-1.25);

\node (13) at (3,-1.5) {};
\node (14) at (5,-1.5) {};
\coordinate [label=right : $X_2$] (X2) at (5.5,-1.5);

\draw[-] (13) to (14);
\draw[-] (13) to [out=0, in=0] (3,-1);
\draw[-] (13) to [out=180, in=180] (3,-1);
\draw[-] (13) to [out=0, in=0] (3,-2);
\draw[-] (13) to [out=180, in=180] (3,-2);
\draw[-] (14) to [out=0, in=0] (5,-1);
\draw[-] (14) to [out=180, in=180] (5,-1);
\draw[-] (14) to [out=0, in=0] (5,-2);
\draw[-] (14) to [out=180, in=180] (5,-2);

\end{tikzpicture}
\end{figure}

\noindent
\underline{Step 2:}
Let $\textbf{\'Et}_{(X,x)}(n)$ be the full subcategory of $\textbf{\'Et}_{(X,x)}$ consisting of maps $f:(G,Y,y)\rightarrow (X,x)$ such that $X_f$ has at most $n$ loops at each vertex, hence is dominated by the graph $X_n$ as above. Our construction defines a functor $\Phi:\textbf{\'Et}_{(X,x)}(n)\rightarrow TL_n \backslash \textbf{GalCov}_{(X_n,x)}$, where the latter category consists of finite pointed Galois topological covers of $(X_n,x)$, considered up to the action of the topological loop group $TL_n$. We claim that $\Phi$ is an isomorphism of categories.

Indeed, let $(\mathcal{Y},y)\rightarrow (X_n,x)$ be a Galois topological cover, and set $G=\textrm{Aut}(\mathcal{Y}|X_n)$.  Let $Y:=\mathcal{Y} - \{\textrm{loops in $\mathcal{Y}$}\}$, and observe that $(G,Y)$ is an \'etale action inducing an isomorphism $G\backslash Y\rightarrow X$. Moreover, $\widetilde{G\backslash Y}$ has at most $n$ loops at each vertex. This construction provides an inverse functor to $\Phi$, and we have established the isomorphism of categories $\textbf{\'Et}_{(X,x)}(n)\cong TL_n \backslash \textbf{GalCov}_{(X_n,x)}.$\\

\noindent
\underline{Step 3:}
$\textbf{\'Et}_{(X,x)}$ is the direct limit of the subcategories $\textbf{\'Et}_{(X,x)}(n)$, so we obtain an isomorphism
$$
\textbf{\'Et}_{(X,x)}\rightarrow\varinjlim_{n} TL_n \backslash \textbf{GalCov}_{(X_n,x)}
=TL_\infty \backslash \textbf{GalCov}^*_{(X_\infty,x)},
$$
where $X_\infty=\varprojlim_{n}X_n$ is the graph $X$ with countably-many loops at each vertex, $TL_\infty=\prod_{V(X)}\varinjlim_{n}\left((\mathbb{Z}/2\mathbb{Z})^n\rtimes \mathfrak{S}_n\right)$, and $\textbf{GalCov}^*_{(X_\infty,x)}$ is the category of finite pointed Galois covers $(\mathcal{Y},y)\rightarrow (X_{\infty},x)$ with the property that all but finitely many loops of $X_{\infty}$ lift to loops in $\mathcal{Y}$.\\

\noindent
\underline{Step 4:}
The fiber functor at $x$ yields an equivalence of categories
$\textrm{Fib}_x:\textbf{Cov}_{(X_\infty,x)}\rightarrow \widehat{\pi_1(X_\infty,x)}-\textbf{FPSets}$, where $\textbf{Cov}_{(X_\infty,x)}$ is the category of finite pointed topological covers, $\widehat{\pi_1(X_{\infty},x)}$ denotes the profinite completion of the topological fundamental group $\pi_1(X_\infty,x)$, and \textbf{FPSets} stands for finite pointed sets. Observe that the spanning tree $T\subset X$ specifies a van Kampen isomorphism 
$$
\rho_T:\pi_1(X_\infty,x) \ \tilde{\rightarrow} \ \pi_1(X,x)\coprod\coprod_{V(X)}\pi_1(R_\infty),
$$
where $R_\infty$ is the rose with countably-many loops, and $\coprod$ denotes the coproduct in the category of groups, i.e. the free product. Moreover, the group $\pi_1(R_\infty)$ is free of countable rank, and has a canonical system of generators, well defined up to inverses.

Now $(\mathcal{Y},y)\rightarrow (X_{\infty},x)$ is in $\textbf{Cov}^*_{(X_\infty,x)}$ if and only if all but finitely many of the loops in $X_{\infty}$ lift to loops in $\mathcal{Y}$ if and only if all but finitely many of the canonical generators of the rose subgroups $\rho_T^{-1}(\pi_1(R_\infty))\le\pi_1(X_{\infty},x)$ are in the kernel of the associated monodromy representation. It follows that the fiber functor restricts to an equivalence of categories $\textrm{Fib}_x:\textbf{Cov}^*_{(X_\infty,x)}\rightarrow\hat{F}(\pi_1(X_\infty,x))-\textbf{FPSets}$, where $\hat{F}(\pi_1(X_\infty,x))$ denotes the \emph{free} profinite completion with respect to the roses, i.e. the inverse limit with respect to the system of normal subgroups of finite index containing all but finitely many of the canonical generators of the rose subgroups $\rho_T^{-1}(\pi_1(R_\infty))$. Define the \emph{\'etale fundamental group} of $(X,x)$ to be $\pi_1^\textrm{\'et}(X,x):=\hat{F}(\pi_1(X_\infty,x))$.

The preceding equivalence descends to an equivalence between the quotient categories under the actions of $TL_\infty$: 
$$
\textrm{Fib}_x:TL_\infty\backslash\textbf{Cov}^*_{(X_\infty,x)}\rightarrow (\pi_1^\textrm{\'et}(X,x)-\textbf{FPSets})/TL_{\infty}.
$$
Note that $\textrm{Fib}_x$ transforms the left action of $TL_\infty$ on covers into a right action on sets. Since finite Galois covers correspond to the coset spaces of open normal subgroups, we have an equivalence of categories
$$
TL_\infty\backslash\textbf{GalCov}^*_{(X_\infty,x)}\cong
TL_\infty\backslash\textbf{OpNor}_{\pi_1^\textrm{\'et}(X,x)},
$$
where the objects of the latter category are the orbits of the open normal subgroups of $\pi_1^\textrm{\'et}(X,x)$ under the left action of $TL_\infty$, and the morphisms are given by
$$
\textrm{Hom}(TL_\infty N,TL_\infty N')=\begin{cases}
\star& \text{if $\sigma N\subset N'$  for some $\sigma\in TL_\infty$},\\
\emptyset& \text{otherwise}.
\end{cases}
$$
But this category is in turn equivalent to $\textbf{FinSurj}(\pi_1^\textrm{\'et}(X,x))/TL_\infty$ of $TL_\infty$-orbits of surjections onto finite groups, where the Hom-sets are either empty or singletons as before. Putting all of this together, we obtain an equivalence of categories which we record in the following theorem.
\begin{theorem}\label{Et} There exists an equivalence of categories
$$
\textbf{\'Et}_{(X,x)}\rightarrow \textbf{FinSurj}(\pi_1^\textrm{\'et}(X,x))/TL_\infty,
$$
where the topological loop group $TL_\infty$ acts on the right by pre-composition with homomorphisms from $\pi_1^\textrm{\'et}(X,x)$. 
\end{theorem}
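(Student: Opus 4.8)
The plan is to read off the desired equivalence as the composite of the chain of isomorphisms and equivalences of categories constructed in Steps 1--4 above; essentially no new input is required beyond checking that each link is legitimate. Concretely, I would record the isomorphism $\textbf{\'Et}_{(X,x)}(n)\cong TL_n\backslash\textbf{GalCov}_{(X_n,x)}$ of Steps 1--2, pass to the direct limit as in Step 3 to get $\textbf{\'Et}_{(X,x)}\cong TL_\infty\backslash\textbf{GalCov}^*_{(X_\infty,x)}$, and then apply the fiber functor of Step 4 to obtain
$$
TL_\infty\backslash\textbf{GalCov}^*_{(X_\infty,x)}\;\cong\;TL_\infty\backslash\textbf{OpNor}_{\pi_1^\textrm{\'et}(X,x)}\;\cong\;\textbf{FinSurj}(\pi_1^\textrm{\'et}(X,x))/TL_\infty.
$$
Composing these three statements yields the theorem, with $TL_\infty$ acting on the right by pre-composition as asserted.

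The bulk of Steps 2 and 3 is bookkeeping. For Step 2 I would verify that $\Phi$ is well defined and that the assignment $(\mathcal{Y},y)\mapsto(\textrm{Aut}(\mathcal{Y}|X_n),\,\mathcal{Y}-\{\text{loops}\})$ is a two-sided inverse: deleting the loops of $X_f$ from $Y$ disturbs neither harmonicity nor the $G$-action --- this is where the hypotheses that the action is \emph{unflipped} (hence fixed-point free away from vertices) and \emph{\'etale} (trivial inertia, so that $\widetilde{G\backslash Y}\to G\backslash Y$ is compatible with $f$) enter --- and the indeterminacy in the isomorphism $\tilde f$ is exactly the $TL_n$-action recorded in Step 1. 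Conversely one checks that $\mathcal{Y}-\{\text{loops}\}$ carries an \'etale $G$-action with $G\backslash Y$ isomorphic to $X$ and at most $n$ loops per vertex in $\widetilde{G\backslash Y}$, and that the two composites are naturally isomorphic to the identities; the pointedness together with the fixed spanning tree $T$ rigidifies all intervening choices, and $TL_n$-equivariance and functoriality are then routine. For Step 3 one notes that the $\textbf{\'Et}_{(X,x)}(n)$ exhaust $\textbf{\'Et}_{(X,x)}$ (every \'etale cover has finitely many loops per vertex in $\widetilde{G\backslash Y}$), that the $X_n$ form an inverse system with limit the loop-graph $X_\infty$, and that the colimit of the categories $TL_n\backslash\textbf{GalCov}_{(X_n,x)}$ is $TL_\infty\backslash\textbf{GalCov}^*_{(X_\infty,x)}$ --- the star condition being precisely the finiteness constraint that survives in the limit.

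The real work --- and the step I expect to be the main obstacle --- is Step 4, and within it the identification of the correct profinite fundamental group. The fiber functor $\textrm{Fib}_x$ gives the standard equivalence $\textbf{Cov}_{(X_\infty,x)}\simeq\widehat{\pi_1(X_\infty,x)}\text{-}\textbf{FPSets}$, but one must cut down to the subcategory $\textbf{Cov}^*$ of covers in which all but finitely many loops lift to loops, and show --- via the van Kampen isomorphism $\rho_T$ --- that this corresponds exactly to the monodromy representations killing all but finitely many of the canonical rose generators; this forces the passage from $\widehat{\pi_1(X_\infty,x)}$ to the \emph{free} profinite completion $\hat F(\pi_1(X_\infty,x))=\pi_1^\textrm{\'et}(X,x)$, and it is here that the choice of spanning tree $T$, which pins down the canonical generators, is indispensable. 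One then descends the equivalence along the $TL_\infty$-actions --- noting that $\textrm{Fib}_x$ converts the left $TL_\infty$-action on covers into a right action on pointed sets, so that the quotients match --- and finally translates ``finite Galois cover'' into ``open normal subgroup of $\pi_1^\textrm{\'et}(X,x)$'', and thence into ``surjection onto a finite group'', checking that the Hom-sets collapse consistently to singletons or $\emptyset$ on each side. Assembling these links gives the stated equivalence.
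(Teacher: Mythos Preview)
Your proposal is correct and follows essentially the same route as the paper: the theorem is recorded precisely as the composite of the equivalences assembled in Steps~1--4, and you have correctly identified the key ingredients at each stage (the unflipped/\'etale hypotheses in Step~2, the exhaustion and star condition in Step~3, and the passage to the free profinite completion via $\rho_T$ in Step~4). There is no separate argument beyond stringing these together, which is exactly what the paper does.
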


\section{Concrete description of \'etale $G$-covers}\label{concrete}

We now utilize the equivalence of Theorem~\ref{Et} to provide a concrete description of \'etale $G$-covers of a connected pointed graph $(X,x)$. Recall from the previous section that the spanning tree $T\subset X$ provides a van Kampen isomorphism $\rho_T:\pi_1^\textrm{\'et}(X,x)\cong\widehat{\pi_1(X,x)}\coprod\coprod_{V(X)}\hat{F}(\pi_1(R_\infty))$. Moreover, the group $\hat{F}(\pi_1(R_\infty))$ is isomorphic to the free profinite completion of the free group on countably many generators, i.e. the inverse limit with respect to the system of normal subgroups of finite index containing all but finitely many of the generators. 

By Theorem~\ref{Et}, to give a pointed \'etale $G$-cover $(G,Y,y)\rightarrow (X,x)$ is to give a surjective homomorphism from $\pi_1^\textrm{\'et}(X,x)$ onto $G$, considered up to pre-composition with elements of $TL_\infty$. But via the isomorphism $\rho_T$, such a homomorphism is the same as a homomorphism $\pi_1(X,x)\rightarrow G$ (yielding a topological $G$-cover of $X$) together with a finite multi-set of nontrivial elements of $G$ for each vertex of $X$, and the $TL_\infty$-action means that these multi-sets are symmetric and unordered. Clearly, the homomorphism from $\pi_1^\textrm{\'et}(X,x)$ will be surjective exactly when the image of the homomorphism from $\pi_1(X,x)$ together with the union of the multi-sets generate $G$. If $S_z=\{\delta_i\}$ is the multi-set attached to $z\in V(X)$, then the fiber $(G,Y_z,\tilde{z})$ of the corresponding $G$-cover is isomorphic to the Cayley graph $\textrm{Cay}(G,S_z)$ (see Example~\ref{Cay}). Here,  $\tilde{z}$ denotes the vertex of $Y_z$ determined by the unique lifting $\tilde{T}$ of the tree $T\subset X$ to $(Y,y)$ (see section~\ref{trees}). Moreover, the decomposition group at $\tilde{z}$ is the subgroup $\Delta_{\tilde{z}}$ generated by $S_z$ (see Definition~\ref{DI}). If $\Delta_{\tilde{z}}$ is a proper subgroup of $G$, then the disconnected fiber $(G,Y_z,\tilde{z})$ is obtained by induction from a connected Cayley graph on $\Delta_{\tilde{z}}$:
$$
(G,Y_z,\tilde{z})\cong\textrm{Ind}_{\Delta_{\tilde{z}}}^G\textrm{Cay}(\Delta_{\tilde{z}},S_z).
$$

Thus, we see that an \'etale $G$-cover of $X$ is a family of Cayley graphs on $G$ over the base $X$. The case of a topologically unramified cover corresponds to a family of trivial Cayley graphs on $G$, each with vertex set $G$ and no edges.  If we ignore basepoints in our covers, then an \'etale $G$-cover of $X$ corresponds to a surjection from $\pi_1^{\textrm{\'et}}(X,x)$ onto $G$, considered up to inner automorphisms of $G$. Hence, to give such a cover is to give the data described above, up to uniform conjugation by elements of $G$.

\section{Branched $G$-covers and inertia structures}

We now extend our classification to arbitrary harmonic $G$-covers of $X$. For this, fix a finite group $G$, and consider the full subcategory $\textbf{Harm}^G_{(X,x)}$ of $\textbf{Harm}_{(X,x)}$ consisting of $G$-covers of $X$ for the particular group $G$.

\begin{defn}
A \emph{$G$-inertia structure} on $X$ is a collection of subgroups $\mathcal{I}=\{I_z\le G \ | \ z\in V(X)\}$.
\end{defn}

 If $(G,Y,y)\rightarrow (X,x)$ is an \'etale $G$-cover, then as explained in the preceding section, each fiber $(G,Y_z,\tilde{z})$ is canonically isomorphic to a Cayley graph $\textrm{Cay}(G,S_z)$. In particular, we have a canonical identification $V(Y_z)=G$, which defines a \emph{right} action of the subgroup $I_z$ on $V(Y_z)$, given by right multiplication of $I_z$ on $G$. Let $Y^{I_z}$ be the graph obtained from $Y$ by identifying the right orbits of $I_z$  on $V(Y_z)$, and deleting any resulting loops. Then $G$ acts harmonically on $Y^{I_z}$ with quotient $X$ and inertia $I_z$ at the image $\tilde{z}'$ of $\tilde{z}$ in $Y^{I_z}$. Collapsing all of the fibers in this way, we obtain a harmonic $G$-cover $(G,Y^\mathcal{I},y')\rightarrow (X,x)$, together with a lifting $\tilde{T}^\mathcal{I}\subset Y^\mathcal{I}$ of the tree $T\subset X$, given by the image of the unique lifting $\tilde{T}\subset (Y,y).$ By construction, the inertia group at $\tilde{z}'\in V(Y^\mathcal{I})$ is the given subgroup $I_z\in\mathcal{I}$.

We thus have a functor 
$$
\mathcal{F}^\mathcal{I}:\textbf{\'Et}^G_{(X,x)}\rightarrow \textbf{Harm}^{G,C(\mathcal{I})}_{(X,x)}
$$
from the full subcategory of pointed \'etale $G$-covers of $(X,x)$ to the full subcategory of pointed harmonic $G$-covers $(G,Y,y)\rightarrow (X,x)$ with inertia groups given by the conjugacy classes 
$C(\mathcal{I}):=\{c(I_z) \ | \ z\in V(X)\}$ such that the inertia group at $y$ is $I_x$.

\begin{prop}\label{Harm}
 Every object of $\textbf{Harm}^{G,C(\mathcal{I})}_{(X,x)}$ is in the image of a functor $\mathcal{F}^{\tilde{\mathcal{I}}}$ where $\tilde{\mathcal{I}}=\{g_zI_zg_z^{-1} \ | \ z\in V(X)-\{x\}\}\cup\{I_x\}$ is a pointwise conjugate of $\mathcal{I}$ away from $x$.
 \end{prop}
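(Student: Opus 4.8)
The plan is to reconstruct, from a given harmonic $G$-cover of $X$, an \'etale $G$-cover that collapses onto it, having first read off the correct inertia structure $\tilde{\mathcal{I}}$ from a lift of the spanning tree. First I would dispose of the degenerate case $X=\star$ (the construction below also handles it, with only the basepoint vertex present) and assume $X\neq\star$. Given an object $f:(G,Y,y)\to(X,x)$ of $\textbf{Harm}^{G,C(\mathcal{I})}_{(X,x)}$, the quotient $G\backslash Y\cong X$ is connected and is not the point graph, so by the proposition of Section~\ref{HG} on degenerate harmonic quotients $f$ is non-degenerate; hence $T\subset X$ lifts to a subtree $\tilde T\subset Y$ through $y$, yielding the vertex section $z\mapsto\tilde z$ with $\tilde x=y$. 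I would then \emph{define} $\tilde{\mathcal{I}}:=\{I_{\tilde z}\mid z\in V(X)\}$, where $I_{\tilde z}$ is the inertia group (the $G$-stabilizer) of $\tilde z$. Since the inertia groups over $z$ exhaust the conjugacy class $c(I_z)$, we have $I_{\tilde z}=g_zI_zg_z^{-1}$ for suitable $g_z\in G$, and because the inertia at $y=\tilde x$ is required to equal $I_x$ on the nose we may take $g_x=\varepsilon$. Thus $\tilde{\mathcal{I}}$ has exactly the shape asserted, and the problem reduces to producing an \'etale $G$-cover $(G,Z,z_0)\to(X,x)$ with $\mathcal{F}^{\tilde{\mathcal{I}}}(Z)\cong Y$ as pointed $X$-graphs.

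The local ingredient is a structure statement for the fibers. For each $z$, let $Y_z^0$ be the connected component of $\tilde z$ in the fiber $Y_z$, with stabilizer $\Delta_{\tilde z}$; then $\Delta_{\tilde z}$ acts on $Y_z^0$ transitively on vertices with point-stabilizer $I_{\tilde z}$ at $\tilde z$, the quotient is $\star$, and the action is unflipped and harmonic. By Proposition~\ref{Crit} the group $I_{\tilde z}$ acts freely on the set of edges at $\tilde z$, so I would choose orbit representatives $e_1,\dots,e_{k_z}$ for that action and record the elements $\delta^{(z)}_j\in\Delta_{\tilde z}\setminus I_{\tilde z}$ with $e_j$ running from the coset $I_{\tilde z}$ to $\delta^{(z)}_jI_{\tilde z}$ (no $\delta^{(z)}_j$ lies in $I_{\tilde z}$, since $Y$ has no loops), setting $S_z^\circ:=\{\delta^{(z)}_j,(\delta^{(z)}_j)^{-1}\}_j$ and doubling any involution class as in the unflipped-model discussion of Section~\ref{HG}. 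A direct check using freeness of the $I_{\tilde z}$-action and the fact that vertices are cosets of $I_{\tilde z}$ then shows that, as pointed $G$-graphs, $Y_z\cong\textrm{Cay}(G,S_z^\circ)^{I_{\tilde z}}$; here the right $I_{\tilde z}$-collapse re-links the components of the (possibly disconnected) Cayley graph, and the component of $\tilde z$ is $\cong Y_z^0$ precisely because $\langle S_z^\circ,I_{\tilde z}\rangle=\Delta_{\tilde z}$ by connectivity of $Y_z^0$. In particular each fiber of $Y$ is a collapsed Cayley graph, exactly as in Section~\ref{concrete}.

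Next I would assemble $Z$. The horizontal edges of $Y$ lying over a fixed $\bar e\in E(X)$ form a single \emph{free} $G$-orbit: an automorphism fixing such an edge fixes both endpoints (distinct, since $X$ has no loops) and hence the directed edge, so is trivial by Proposition~\ref{Crit}. Choosing for $\bar e\in T$ the representative inside $\tilde T$ identifies the horizontal part of $Y$ with a topological $G$-cover of $X$ collapsed along the fibers, with monodromy elements $\rho(\bar e)\in G$ for $\bar e\notin T$, each well defined up to right multiplication by $I_{\tilde z'}$ at the target vertex. Now let $Z$ be the graph with vertex set $\bigsqcup_{z\in V(X)}G$, with vertical part $\textrm{Cay}(G,S_z)$ over $z$ where $S_z:=S_z^\circ\cup(\text{a generating set of }I_{\tilde z})$, and with horizontal $G$-torsors over each $\bar e$ glued according to $\rho(\bar e)$ (trivially along $T$). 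Since $G$ acts freely on $V(Z)$, the cover $Z$ is \'etale, hence harmonic and unflipped by Proposition~\ref{Crit}, and $G\backslash Z\cong X$; and collapsing $Z$ by the right $I_{\tilde z}$-actions leaves the horizontal part untouched and returns $Y_z$ on each fiber (the edges contributed by the adjoined generators of $I_{\tilde z}$ become loops and are deleted), so that $\mathcal{F}^{\tilde{\mathcal{I}}}(Z)\cong Y$ over $(X,x)$, with inertia $I_x$ at $y$, once basepoints and the $G$-action are matched up.

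The one genuine difficulty — and the reason $S_z^\circ$ had to be enlarged to $S_z$ — is connectivity of $Z$. A copy-counting argument, decomposing $Y$ (respectively $Z$) into the $G$-translates of $\tilde T$ glued along inertia identifications and the remaining (non-tree horizontal and vertical) edges, shows that $Y$ is connected iff $\langle\{\rho(\bar e)\}_{\bar e\notin T}\cup\bigcup_zS_z^\circ\cup\bigcup_zI_{\tilde z}\rangle=G$, while $Z$ is connected iff $\langle\{\rho(\bar e)\}_{\bar e\notin T}\cup\bigcup_zS_z\rangle=G$. The adjoined generators lie inside the $I_{\tilde z}$, so they do not affect $\mathcal{F}^{\tilde{\mathcal{I}}}(Z)$, yet they are exactly what makes the two generation conditions coincide; hence connectivity of $Y$ forces connectivity of $Z$. (This is the concrete counterpart of the fact, implicit in Section~\ref{concrete}, that surjectivity of the homomorphism out of $\pi_1^{\textrm{\'et}}(X,x)$ amounts to the vertex multisets together with the image of $\pi_1(X,x)$ generating $G$.) Granting this, the remaining verifications — functoriality, compatibility with basepoints, and the identification $\mathcal{F}^{\tilde{\mathcal{I}}}(Z)\cong Y$ — are routine, so I expect the connectivity bookkeeping to be where essentially all the content lies.
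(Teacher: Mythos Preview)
Your proposal is correct and follows essentially the same approach as the paper: lift the spanning tree to read off $\tilde{\mathcal{I}}$, extract from each fiber $Y_z$ a symmetric multi-set $S_z$ of coset representatives using freeness of the $I_{\tilde z}$-action on edges, replace each fiber by the Cayley graph $\mathrm{Cay}(G,S_z\cup(\text{inertia elements}))$, and glue along the horizontal $G$-torsors. The only cosmetic differences are that the paper adjoins \emph{all} of $I_{\tilde z}\setminus\{\varepsilon\}$ rather than merely a generating set, and builds $Y^{\textrm{\'et}}$ by modifying $Y$ one vertex at a time rather than assembling $Z$ globally; your more explicit connectivity bookkeeping is exactly what the paper's one-line assertion ``the inclusion of the non-identity inertia elements guarantees that $Y^{\textrm{\'et}}$ is connected'' is abbreviating.
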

 
 \begin{proof}
Let $f:(G,Y,y)\rightarrow (X,x)$ be a harmonic $G$-cover with inertia given by the conjugacy classes $C(\mathcal{I})$ such that $I_y=I_x$. Choose a lifting $\tilde{T}\subset (Y,y)$ of $T\subset X$, which defines a $G$-inertia structure $\tilde{\mathcal{I}}=\{I_{\tilde{z}} \ | \ z\in V(X)\}\in C(\mathcal{I})$ on $X$. For each $z\in V(X)$, we have an isomorphism of left $G$-sets $G/I_{\tilde{z}} \rightarrow V(Y_z)$ defined by sending $gI_{\tilde{z}}$ to $g\tilde{z}$. Moreover, since the $G$-action is harmonic, the inertia group $I_{\tilde{z}}$ acts freely on the edges adjacent to $\tilde{z}$ in $Y_z$, so those edges may be labeled by a multi-set $\overline{S}_z:=\{I_{\tilde{z}}\delta_i I_{\tilde{z}}\}$ of left $I_{\tilde{z}}$-orbits of left cosets of $I_{\tilde{z}}$. Here we must count appropriately: not only can each orbit appear more than once (accounting for multiple edges), but every orbit corresponds to $|I_{\tilde{z}}|$ edges. 

Observe that if $e\in E(Y_z)$ connects $I_{\tilde{z}}$ to $\delta I_{\tilde{z}}$, then the edge $\delta^{-1} e$ connects $I_{\tilde{z}}$ to $\delta^{-1} I_{\tilde{z}}$. This implies that the multi-set $\overline{S}_z$ is symmetric: the multiplicity of $I_{\tilde{z}}\delta_i I_{\tilde{z}}$ in $\overline{S}_z$ is the same as the multiplicity of $I_{\tilde{z}}\delta_i^{-1} I_{\tilde{z}}$. Of course, it is possible that $I_{\tilde{z}}\delta_i I_{\tilde{z}}=I_{\tilde{z}}\delta_i^{-1}I_{\tilde{z}}$ for some $i$, but the multiplicity of such a self-inverse orbit in $\overline{S}_z$ is automatically even since the $G$-action is unflipped. Indeed, for a self-inverse orbit, we may choose a representative $\delta$ so that $\delta I_{\tilde{z}}=\delta^{-1} I_{\tilde{z}}$. If $e$ connects $I_{\tilde{z}}$ to $\delta I_{\tilde{z}}$, then so does the distinct edge $\delta e$. But then the two orbits $I_{\tilde{z}}e$ and $I_{\tilde{z}}\delta e$ are distinct and together contribute 2 to the multiplicity of $I_{\tilde{z}}\delta I_{\tilde{z}}$ in $\overline{S}_z$. 

The symmetry of $\overline{S}_z$ allows us to define a symmetric multi-set of elements from $G$, given by $S_z:=\cup_i\{\delta_i,\delta_i^{-1}\}$, where for each inverse pair of orbits from $\overline{S}_z$, we have chosen an inverse pair of representatives. Note that if the orbit $I_{\tilde{z}}\delta_i I_{\tilde{z}}$ can be represented by an involution $\delta_i$, then it is self-inverse, and occurs with even multiplicity by the previous paragraph. In this special case, we take each \emph{pair} of orbits $I_{\tilde{z}}\delta_i I_{\tilde{z}}$ to contribute one copy of the involution $\delta_i$ to $S_z$.

Next, we choose a finite set of edges $\{\xi_j\}$ adjacent to $\tilde{z}$ in $Y-Y_z$ such that $f^{-1}(E(z(1)))$ is the disjoint union of the $G$-orbits of the $\xi_j$ (each of which has size $|G|$ since the action is harmonic). Furthermore, we may choose the $\xi_j$ so as to include the edges of the tree $\tilde{T}\subset Y$ adjacent to $\tilde{z}$. Define a new graph $Y'$ by glueing  $Y-Y_z$ to the Cayley graph $\textrm{Cay}(G,S_z\cup (I_{\tilde{z}}-\{\varepsilon\}))$ (see Example~\ref{Cay}) by identifying the vertex $g\in G$ with the free endpoint  of the edge $g\xi_j$ for all $j$. The resulting graph $Y'$ supports a natural unflipped harmonic $G$-action, and the corresponding harmonic $G$-cover $(G,Y')\rightarrow X$ is \'etale at $z$. Repeating this procedure for all vertices of $X$, we finally obtain an \'etale $G$-cover of $X$, which we denote by $f^\textrm{\'et}:(G,Y^\textrm{\'et}, y^\textrm{\'et})\rightarrow (X,x)$. The inclusion of the non-identity inertia elements in the Cayley graphs guarantees that $Y^\textrm{\'et}$ is connected. Here $y^\textrm{\'et}$ is the vertex of $Y^\textrm{\'et}_x$ corresponding to $\textrm{id}_G$ in the Cayley graph construction starting with the inertia group $I_y=I_x\in\tilde{\mathcal{I}}$. By construction, we have $\mathcal{F}^{\tilde{\mathcal{I}}}(f^\textrm{\'et})=f$.
\end{proof}

The functors $\mathcal{F}^\mathcal{I}$ are neither full nor faithful. For instance, two \'etale covers that differ by vertical edges corresponding to inertia elements in $\mathcal{I}$ will be sent to the same harmonic cover by $\mathcal{F}^\mathcal{I}$.
Since our main concern in the next section is with existence theorems for harmonic $G$-covers, this will not concern us. We end this section by illustrating the construction of Proposition~\ref{Harm} via a continuation of Example~\ref{S3}.

\begin{ex}[An $\mathfrak{S}_3$-cover with inertia of order 2]\label{S3inertia}
Consider the symmetric group $\mathfrak{S}_3=\left<\sigma,\tau \ | \ \sigma^3=\tau^2=\varepsilon, \sigma\tau=\tau\sigma^2\right>$, acting as the permutation group of the double-sided triangular graph pictured below. (We have doubled the edges to permit an unflipped action.) As in the proof of Proposition~\ref{Harm}, the vertices have been labeled by the left-cosets of the inertia group $\left<\tau\right>$.

\begin{figure}[h]
\centering
\begin{tikzpicture}
\tikzstyle{every node}=[circle, draw, fill=black!50, inner sep=0pt, minimum width=3pt]

\node (4) at  (-1,0)  {};
\coordinate [label=left : $\left<\tau\right>$] (epsilon) at (-1.1,0);
\node (5) at  (0,.7) {} ;  
\coordinate [label=above : $\sigma\left<\tau\right>$] (sigma) at (0,.8);      
\node (6) at  (1,0) {};
\coordinate [label=right : $\sigma^2\left<\tau\right>$] (sigma2) at (1.1,0);

\draw[-] (4) to [out=90, in=180] (5);
\draw[-] (4) to (5);
\draw[-] (5) to [out=0, in=90] (6);
\draw[-] (5) to (6);
\draw[-] (6) to [out=210, in=330] (4);
\draw[-] (6) to (4);

\end{tikzpicture}
\end{figure}

\noindent
Following the proof of Proposition~\ref{Harm}, the multi-set $\overline{S}$ contains two orbits: $\overline{S}=\{\left<\tau\right>\sigma\left<\tau\right>, \left<\tau\right>\sigma^2\left<\tau\right>\}$, from which we obtain the multi-set $S=\{\sigma,\sigma^2\}=\{\sigma,\sigma^{-1}\}$. In order to obtain a connected \'etale group action, we construct the Cayley graph $\textrm{Cay}(\mathfrak{S}_3,S\cup\{\tau\})$ shown below, which gives an \'etale $\mathfrak{S_3}$-cover of the point-graph $\star$ as described in Example~\ref{S3}.

\begin{figure}[h]
\centering
\begin{tikzpicture}
\tikzstyle{every node}=[circle, draw, fill=black!50, inner sep=0pt, minimum width=3pt]

\node (1) at  (-1,1)  {};
\coordinate [label=left : $\tau$] (tau) at (-1.1,1);
\node (2) at  (0,1.7) {} ;  
\coordinate [label=above : $\sigma\tau$] (sigmatau) at (0,1.7);      
\node (3) at  (1,1) {};
\coordinate [label=right : $\sigma^2\tau$] (sigma2tau) at (1.1,1);

\node (4) at  (-1,0)  {};
\coordinate [label=left : $\varepsilon$] (epsilon) at (-1.1,0);
\node (5) at  (0,.7) {} ;  
\coordinate [label=below : $\sigma$] (sigma) at (0,.6);      
\node (6) at  (1,0) {};
\coordinate [label=right : $\sigma^2$] (sigma2) at (1.1,0);

\draw[-] (1) to (2);
\draw[-] (2) to (3);
\draw [-] (3) to (1);

\draw[-] (4) to (5);
\draw[-] (5) to (6);
\draw[-] (6) to (4);

\draw[-] (1) to [out=240, in=120] (4);
\draw[-] (1) to [out=300, in=60] (4);
\draw[-] (2) to [out=240, in=120] (5);
\draw[-] (2) to [out=300, in=60] (5);
\draw[-] (3) to [out=240, in=120] (6);
\draw[-] (3) to [out=300, in=60] (6);

\end{tikzpicture}
\end{figure}
\noindent
Applying the functor $\mathcal{F}^{\left<\tau\right>}$ to this cover yields the original $\mathfrak{S}_3$-action on the double-sided triangle.
\end{ex}

\section{Embedding problems and a Grunwald-Wang theorem}\label{EP}

In this section we consider embedding problems for finite graphs (see \cite{Har} section 5.1 for more on embedding problems in the context of function fields of curves). The following theorem states that finite embedding problems for graphs always have proper solutions, with good control over the branch locus.

\begin{theorem}\label{Embed}
Suppose that $f:(G,Y,y)\rightarrow (X,x)$ is a connected\footnote{In this section we consistently emphasize the connectedness of the covers, even though this is redundant given our definition of harmonic $G$-covers in section~\ref{HG}.} harmonic $G$-cover, and consider an exact sequence of finite groups
$$
1\rightarrow K\rightarrow G'\xrightarrow{\rho} G\rightarrow 1.
$$
Then there exists a connected harmonic $G'$-cover of $(X,x)$ dominating $f$ via the group homomorphism $\rho$. If $f$ is \'etale, then there exists a dominating \'etale $G'$-cover. Moreover, if the branch locus (horizontal and vertical) of $f$ is nonempty, then there exists a dominating $G'$-cover with the same branch locus as $f$.
\end{theorem}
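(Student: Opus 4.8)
The plan is to reduce the general (branched) case to the étale case, which can in turn be handled by the classification of étale $G$-covers via the étale fundamental group $\pi_1^{\textrm{\'et}}(X,x)$ from Theorem~\ref{Et}. First, given the harmonic $G$-cover $f:(G,Y,y)\rightarrow(X,x)$, I would fix a lifting $\tilde T\subset(Y,y)$ of the spanning tree $T\subset X$, producing (as in Proposition~\ref{Harm}) a $G$-inertia structure $\tilde{\mathcal I}=\{I_{\tilde z}\le G\}$ on $X$ together with an étale $G$-cover $f^{\textrm{\'et}}:(G,Y^{\textrm{\'et}},y^{\textrm{\'et}})\rightarrow(X,x)$ with $\mathcal{F}^{\tilde{\mathcal I}}(f^{\textrm{\'et}})=f$. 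So it suffices to (a) solve the embedding problem for the étale cover $f^{\textrm{\'et}}$ with suitable control on the new inertia structure, and then (b) apply an appropriate $\mathcal F^{\bullet}$ functor to push the solution back down and recover the branch locus of $f$.

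For step (a): an étale $G$-cover corresponds, by Theorem~\ref{Et} and the van Kampen decomposition $\rho_T:\pi_1^{\textrm{\'et}}(X,x)\cong\widehat{\pi_1(X,x)}\coprod\coprod_{V(X)}\hat F(\pi_1(R_\infty))$, to a surjection $\theta:\pi_1^{\textrm{\'et}}(X,x)\twoheadrightarrow G$. Since $\pi_1^{\textrm{\'et}}(X,x)$ is a free profinite product of a free profinite group (the profinite completion of $\pi_1(X,x)\cong F_{g(X)}$ is free profinite) with countably-ranked free profinite factors at each vertex, it is itself a free profinite group of countably infinite rank — in particular it is a projective profinite group, so the surjection $\theta$ lifts to a surjection $\theta':\pi_1^{\textrm{\'et}}(X,x)\twoheadrightarrow G'$ with $\rho\circ\theta'=\theta$. (Surjectivity of the lift is guaranteed because a free profinite group surjects onto $G'$ and, since $K=\ker\rho$ is contained in the Frattini-type situation over $G$, one can choose the lift to remain surjective — the standard argument using that $\pi_1^{\textrm{\'et}}(X,x)$ has infinite free rank so there is room to adjust generators killed in $G$ to hit a generating set of $K$.) The homomorphism $\theta'$ then defines the dominating étale $G'$-cover, proving the étale assertion. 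I would be careful here to arrange, via the explicit multi-set description in section~\ref{concrete}, that the vertex multi-sets $S'_z$ for $\theta'$ map onto those of $\theta$ and that $S'_z$ is nonempty exactly when $S_z$ is nonempty — this is possible because the free rank at each vertex is infinite, so one can add enough elements of $K$ to each multi-set to ensure global surjectivity while only introducing new edges at vertices that were already vertically ramified.

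For step (b) and the branch-locus control: the horizontal branch locus of $\mathcal{F}^{\mathcal J}$ applied to an étale $G'$-cover is exactly the support of the inertia structure $\mathcal J$, and the vertical branch locus is exactly the set of vertices with nonempty multi-set. So I would apply $\mathcal{F}^{\mathcal J}$ where $\mathcal J=\{J_z\le G'\}$ is chosen so that $\rho(J_z)=I_{\tilde z}$ and $J_z$ is nontrivial iff $I_{\tilde z}$ is nontrivial (take $J_z$ to be any subgroup of $G'$ mapping onto $I_{\tilde z}$, e.g. a complement or just $\rho^{-1}(I_{\tilde z})$ when $I_{\tilde z}\ne\{\varepsilon\}$, and $J_z=\{\varepsilon\}$ when $I_{\tilde z}=\{\varepsilon\}$). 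Combined with the multi-set bookkeeping from step (a), the resulting harmonic $G'$-cover has horizontal branch locus equal to $\mathrm{supp}(\tilde{\mathcal I})$ and vertical branch locus equal to that of $f$, hence total branch locus equal to that of $f$; and it dominates $f$ via $\rho$ by functoriality of the construction and the relation $\mathcal{F}^{\tilde{\mathcal I}}(f^{\textrm{\'et}})=f$. The connectedness of the $G'$-cover follows as in Proposition~\ref{Harm} from including the non-identity inertia elements (and enough of $K$) in the relevant multi-sets.

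The main obstacle I expect is step (a), specifically ensuring that the lift $\theta'$ can be chosen \emph{surjective} while simultaneously respecting the vertex-by-vertex structure so that no new branch points are created where $f$ was unramified. The projectivity of $\pi_1^{\textrm{\'et}}(X,x)$ gives a lift for free, but surjectivity is the classical embedding-problem subtlety; here it is resolved by exploiting that the rose subgroups contribute free factors of infinite countable rank, giving unlimited room to adjust. One must phrase this carefully in terms of the explicit description in section~\ref{concrete} rather than citing a black-box result about free profinite groups, since we also need the finer control on which vertices acquire nontrivial multi-sets.
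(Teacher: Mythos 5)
Your proposal is correct and follows essentially the same route as the paper's proof: reduce to the \'etale case via Proposition~\ref{Harm}, lift the corresponding surjection $\pi_1^{\textrm{\'et}}(X,x)\twoheadrightarrow G$ through $\rho$ using the countably-infinite free profinite rank (placing the extra $K$-generators only in multi-sets at already-branched vertices, which is exactly why the nonempty branch locus hypothesis is needed), and then apply $\mathcal{F}^{\mathcal{I}'}$ for an inertia structure with $\rho(I'_z)=I_z$ and $I'_z=\{\varepsilon\}$ wherever $I_z=\{\varepsilon\}$. Your treatment of the surjectivity of the lift and of the vertex-by-vertex branch-locus bookkeeping is, if anything, more explicit than the paper's.
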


\begin{proof}
By Proposition~\ref{Harm} (and its proof), there exists a connected \'etale $G$-cover $f^\textrm{\'et}:(G,Y^\textrm{\'et},y^\textrm{\'et})\rightarrow (X,x)$ and a  $G$-inertia structure $\mathcal{I}$ on $(X,x)$ such that $f=\mathcal{F}^\mathcal{I}(f^\textrm{\'et})$. Furthermore, by Theorem~\ref{Et}, the cover $f^\textrm{\'et}$ corresponds to a surjection $\phi:\pi_1^\textrm{\'et}(X,x)\rightarrow G$ (up to the action of $TL_\infty$), and we wish to lift $\phi$ through $\rho$ to a surjection $\phi':\pi_1^\textrm{\'et}(X,x)\rightarrow G'$. Such a lifting always exists since $\pi_1^\textrm{\'et}(X,x)$ is free profinite of countably-infinite rank (see section~\ref{concrete}). Each such lifting (up to the action of $TL_\infty$) yields a connected \'etale $G'$-cover $f^{'\textrm{\'et}}:(G',Y^{'\textrm{\'et}},y^{'\textrm{\'et}})\rightarrow (X,x)$ dominating $f^\textrm{\'et}$ via $\rho$. Moreover, assuming that $Y\rightarrow X$ is \emph{not} a topologically unramified cover, we can choose the lifting $\phi'$ so that points of $X$ that are totally split in $Y^\textrm{\'et}$ remain totally split in $Y^{'\textrm{\'et}}$.

Now let $\mathcal{I'}$ be any $G'$-inertia structure on $(X,x)$ with the property that $\rho(I'_z)=I_z\in\mathcal{I}$ for all $z\in V(X)$. Such an inertia structure always exists: take $\mathcal{I'}:=\rho^{-1}(\mathcal{I})$ for instance. Set 
$(G',Y',y'):=\mathcal{F}^\mathcal{I'}(G',Y^{'\textrm{\'et}},y^{'\textrm{\'et}})$, and observe that this connected harmonic $G'$-cover dominates the $G$-cover $(G,Y,y)$. In the case where $\mathcal{I'}:=\rho^{-1}(\mathcal{I})$, note that every point of $Y$ is totally ramified in $Y'$. At the other extreme, if there exists a section $\sigma:G\rightarrow G'$ to $\rho$, then taking $\mathcal{I'}:=\sigma(\mathcal{I})$ yields an \'etale $K$-cover $Y'\rightarrow Y$. Of course, if the point $z\in V(X)$ is horizontally unramified in $Y\rightarrow X$, then $I_z=\{\textrm{id}_G\}$, and we can always choose $I'_z=\{\textrm{id}_{G'}\}$, so that $z$ is still horizontally unramified in $Y'\rightarrow X$.
\end{proof}

Given a finite group $G$ and a connected graph $X$, it is easy to construct a connected \'etale $G$-cover of $X$: start with $|G|$ disjoint copies of $X$ (labeled by the elements of $G$), and connect them with vertical fibers given by connected Cayley graphs $\textrm{Cay}(G,S)$ (see example~\ref{Cay}). But our analysis shows much more than an affirmative answer to this existence problem for \'etale $G$-covers of graphs. As motivation, recall the Grunwald-Wang theorem concerning abelian extensions of a global field $k$ (see \cite{NSW}): let $S$ be a finite set of primes of $k$, and let $A$ be an abelian group. For each $\mathfrak{p}\in S$, let $K_\mathfrak{p}|k_\mathfrak{p}$ be an abelian extension of the completion $k_\mathfrak{p}$ with Galois group isomorphic to a subgroup of $A$. Then (except in one special case), there exists an $A$-Galois extension $K|k$ inducing the given local extensions $K_\mathfrak{p}|k_\mathfrak{p}$ for all $\mathfrak{p}\in S$. For graphs we have:
\begin{theorem}\label{GW}
Let $X$ be a finite connected graph of genus $g$, $B\subset V(X)$ a subset of vertices, and $G$ a finite group.  For each $b\in B$, let $(G_b,Y_b)\rightarrow b$ be a connected harmonic $G_b$-cover of the point $b$, where $G_b$ is isomorphic to a subgroup of $G$.  For each $b\in B$, choose an embedding $\varphi_b:G_b\hookrightarrow G$, and let $G(B)$ be the subgroup of $G$ generated by the images $\varphi_b(G_b)$ for $b\in B$. Assume that  $G=\left<G(B),\gamma_1,\dots,\gamma_g\right>$ for some elements $\gamma_i\in G$. Then there exists a connected harmonic $G$-cover $(G,\mathcal{Y})\rightarrow X$, totally split outside of $B$, such that for each $b\in B$, the fiber $(G,\mathcal{Y}_b)\rightarrow b$ is isomorphic to the $G$-cover of $b$ induced by the given $G_b$-cover $(G_b,Y_b)\rightarrow b$.
\end{theorem}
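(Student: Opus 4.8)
The plan is to realize $\mathcal{Y}$ as $\mathcal{F}^{\mathcal{I}}$ applied to an explicitly built connected \'etale $G$-cover of $(X,x)$, using the concrete description of \'etale $G$-covers from section~\ref{concrete} together with Proposition~\ref{Harm}. The first step is to decode the local data. For each $b\in B$, the connected harmonic $G_b$-cover $(G_b,Y_b)\to b$ is a harmonic $G_b$-cover of the point graph $\star$. Applying Proposition~\ref{Harm} with base $\star$ (trivial spanning tree, a single vertex, hence a single inertia conjugacy class), there is a subgroup $I_b\le G_b$ and a connected \'etale $G_b$-cover of $\star$ --- which by the concrete description is a Cayley graph $\textrm{Cay}(G_b,\hat{S}_b)$ for some finite symmetric unordered multi-set $\hat{S}_b$ of nontrivial elements generating $G_b$ --- such that $Y_b\cong\mathcal{F}^{I_b}(\textrm{Cay}(G_b,\hat{S}_b))$. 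Transporting along $\varphi_b$, regard $I_b$ as a subgroup of $G$ and $\hat{S}_b$ as a multi-set of nontrivial elements of $G$ with $\langle\hat{S}_b\rangle=\varphi_b(G_b)$, so that $\langle\bigcup_{b\in B}\hat{S}_b\rangle=G(B)$.

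Next I would assemble a connected \'etale $G$-cover of $(X,x)$. Since $\pi_1(X,x)$ is free of rank $g$ with a basis indexed by the edges of $X-T$, the concrete description of section~\ref{concrete} lets me specify an \'etale $G$-cover by sending these generators to $\gamma_1,\dots,\gamma_g$ (a homomorphism $h:\pi_1(X,x)\to G$ with image $\langle\gamma_1,\dots,\gamma_g\rangle$), attaching the multi-set $\hat{S}_b$ at each $b\in B$, and attaching the empty multi-set at each $z\in V(X)\setminus B$. By hypothesis $\langle\textrm{im}(h)\cup\bigcup_{b\in B}\hat{S}_b\rangle=\langle\gamma_1,\dots,\gamma_g,G(B)\rangle=G$, so this data defines a connected pointed \'etale $G$-cover $(G,Y^{\textrm{\'et}},y^{\textrm{\'et}})\to(X,x)$ whose fiber over $b\in B$ is $\textrm{Cay}(G,\hat{S}_b)$ and whose fiber over $z\notin B$ is the trivial Cayley graph $\textrm{Cay}(G,\emptyset)$ on $|G|$ isolated vertices.

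Finally I would apply $\mathcal{F}^{\mathcal{I}}$ for the $G$-inertia structure $\mathcal{I}$ with group $I_b$ at each $b\in B$ and the trivial subgroup at each $z\notin B$, and set $(G,\mathcal{Y})=\mathcal{F}^{\mathcal{I}}(G,Y^{\textrm{\'et}},y^{\textrm{\'et}})$, which is again a connected harmonic $G$-cover of $(X,x)$ since $\mathcal{F}^{\mathcal{I}}$ only identifies vertices and deletes loops. At a vertex $z\notin B$ the inertia and the fiber are unchanged, so $\mathcal{Y}_z$ consists of $|G|$ isolated vertices, the cover is still \'etale at $z$, and every point over $z$ has trivial decomposition group, i.e. $z$ is totally split (Definition~\ref{DI}). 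Over $b\in B$, since $\langle\hat{S}_b\rangle=\varphi_b(G_b)$ we have $\textrm{Cay}(G,\hat{S}_b)\cong\textrm{Ind}_{\varphi_b(G_b)}^{G}\textrm{Cay}(G_b,\hat{S}_b)$ as in section~\ref{concrete}; since $\mathcal{F}^{I_b}$ acts by identifying right $I_b$-orbits inside each canonically $G$-labelled fiber, it commutes with induction, giving $\mathcal{Y}_b\cong\textrm{Ind}_{\varphi_b(G_b)}^{G}\mathcal{F}^{I_b}(\textrm{Cay}(G_b,\hat{S}_b))\cong\textrm{Ind}_{\varphi_b(G_b)}^{G}Y_b$, which is exactly the $G$-cover of $b$ induced by $(G_b,Y_b)$.

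I expect the only genuine obstacle to be this last compatibility --- checking that the canonical $G$-labelling of an \'etale fiber is compatible with induction, so that $\mathcal{F}^{I_b}$ commutes with $\textrm{Ind}$ --- together with the observation that the hypothesis $G=\langle G(B),\gamma_1,\dots,\gamma_g\rangle$ is precisely the connectedness criterion of section~\ref{concrete}. Everything else is a direct application of Proposition~\ref{Harm} and the concrete description, so once the bookkeeping of basepoints and canonical identifications is in place the argument is essentially formal.
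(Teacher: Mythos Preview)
Your proposal is correct and follows essentially the same route as the paper: decode each local cover $(G_b,Y_b)$ via Proposition~\ref{Harm} and Theorem~\ref{Et} into an inertia subgroup $I_b$ and a symmetric multi-set, assemble these with the homomorphism $\pi_1(X,x)\to G$ sending a free basis to $\gamma_1,\dots,\gamma_g$ into a surjection $\pi_1^{\textrm{\'et}}(X,x)\to G$, and then apply $\mathcal{F}^{\mathcal{I}}$ for the inertia structure that is $I_b$ at $b\in B$ and trivial elsewhere. Your treatment is in fact somewhat more explicit than the paper's in verifying the fiber identification over $B$ via the induction description of section~\ref{concrete}, and your closing remark correctly isolates the only point requiring care.
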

\begin{proof}
For each $b\in B$, we are given a transitive, harmonic $G_b$-action $(G_b,Y_b,y_b)$, where we have chosen a point $y_b\in V(Y_b)$. By Proposition~\ref{Harm} and Theorem~\ref{Et}, these actions correspond to inertia groups $I_b\subset G_b$ and multi-sets $\{\delta_i^{(b)}\}\subset G_b$. Via the embeddings $\varphi_b$, we may view everything in the group $G$. Since $X$ has genus $g$, there exists a homomorphism $\pi_1(X,x)\rightarrow G$ defined by sending a free basis to the elements $\gamma_i$. This homomorphism together with the multi-sets $\{\delta_i^{(b)}\}$ give a homomorphism $\pi_1^\textrm{\'et}(X,x)\rightarrow G$, and the inertia groups $I_b$ give a $G$-inertia structure $\mathcal{I}$ on $(X,x)$, where we set $I_z=\{\textrm{id}_G\}$ if $z\not\in B$. Together these define a harmonic $G$-cover $(G,\mathcal{Y})\rightarrow X$, which is connected by our assumption on the generation of $G$. By construction, it is totally split outside of $B$, and the fibers over $B$ are as required.
\end{proof}

%%%%%%%%%%%%%%%%%%%%%%%%%%%%%%%%%
% References
%%%%%%%%%%%%%%%%%%%%%%%%%%%%%%%%%

\end{document}